\newcommand{\be}{\begin{equation}}
\newcommand{\ee}{\end{equation}}
\newcommand{\N}{\mathbb{N}}
\newcommand{\R}{\mathbb{R}}
\newtheorem{theorem}{Theorem}[section]
\newtheorem{lemma}[theorem]{Lemma}
\newtheorem{defi}[theorem]{Definition}
\newtheorem{prop}[theorem]{Proposition}
\newtheorem{coro}[theorem]{Corollary}
\begin{document}  
\title{PSEUDOCOMPACT AND PRECOMPACT SUBSEMIGROUPS OF TOPOLOGICAL GROUPS}
\author{JULIO CÉSAR HERNÁNDEZ ARZUSA}

\maketitle 

\begin{abstract}	In this paper we give sufficient conditions under which a subsemigroup of a topological group is a subgroup, adding to the results given in \cite{Kosh, can, axioms, forum, Hof, cc, locally} where conditions exist (such as locally compactness, compactness, feeble compactness and sequential compactness)  for a semigroup to be a group. In our work we proved that closed (or open) precompact subsemigroups of topological groups are semigroups, just like open pseudocompact monoids of topological groups.

\end{abstract}

\thanks{{\em 2010 Mathematics Subject Classification.} Primary: 54B30, 18B30, 54D10; Secondary: 54H10, 22A30 \\
{\em Key Words and Phrases: Precompact set, pseudocompact space, cellularity } }



\section{Preliminaries}
We note by $\R$, $\N$ and $\mathbb{T}$ the set of real numbers, the set of  positive integer numbers  and the unit circle $\R/\mathbb{Z}$, respectively.\\\\Given a space $X$ if $A\subseteq X$, then $\overline{A}$ will note the closure of $A$ in $X$.\\\\
A space $X$ is a \emph{pseudocompact space}  if it is Tychonoff and each continuous mapping $f\colon X\longrightarrow \R$ is bounded. It is easy to prove that if $A$ is a pseudocompact subspace of a space $X$, then $\overline{A}$ is pseudocompact. If $X$ is pseudocompact and $f\colon X\longrightarrow Y$ is a continuous function, being $Y$ a Tychonoff space, then $f(X)$ is pseudocompact. \\\\ A countable  intersection of open sets of a space $X$ is said to be a  $G_\delta$ in  $X$.\\$A\subseteq X$ is a \emph{retract}  if there is a continuous mapping $r\colon X\longrightarrow A$ such that $r(x)=x$ for every $x\in A$. It is not hard to prove that if $A$ is a retract of $X$, then each continuous mapping $f\colon A\longrightarrow Y$ can be extended to a continuous mapping $g\colon X\longrightarrow Y$. From this it follows that if $X$ is pseudocompact and $A$ is a retract, then $A$ is pseudocompact.  It is known that an open closed subset of a space $X$ is a retract of $X$.\\\\
We note by $\R$, $\N$ and $\mathbb{T}$ the set of real numbers, the set of  positive integer numbers  and the unit circle $\R/\mathbb{Z}$, respectively.\\\\
A \emph{topological group} is an abstract group $G$ with topology such that multiplication
on $G$, considered as a mapping of $G \times G$ to $G$, and inversion are both continuous. A subset $S\neq \emptyset$ of group $G$, is called a \emph{subsemigroup} if $SS\subseteq S$. If also, $S$ contains the neutral element of $G$, we say that $S$ is a \emph{submomoid} of $G$. If $G$ is a topological group and $S$ is a subsemigroup of $G$, then the continuity of the multiplication of $G$ implies that $\overline{S}$ is a subsemigroup of $G$. If $V$ is a neighborhood of neutral element in a topological group, we say that $V$ is \emph{symmetric} if $V=V^{-1}$. If $G$ is a topological group, then $N_s(e)$ (resp. $N(e)$) will note the symmetric neighborhoods of $e$ (resp. neighborhoods of $e$) , where $e$ is neutral element of $G$. If $S$ is a topological semigroup and $a\in S$, then the function $r_a\colon S\longrightarrow S$ (resp. $l_a\colon S\longrightarrow S$) defined by $r_a(x)=xa$ (resp. $l_a(x)=ax$), for each $x\in S$, is called the \emph{right shift} (resp. \emph{left shift}) at $a$. Obviously, if $S$ is a topological semigroup $l_a$ and $r_a$ are continuous for every $a\in S$. We say that a topological semigroup $S$ has open shifts, if $r_a$ and $l_a$ are open for each $a\in S$.\\\\
The literature exhibits a variety of sufficient conditions for  a cancellative topological semigroup $S$ to be a topological group, Indeed this is true if
$S$ is
\begin{enumerate}[i)]
\item  compact (\cite[Proposition A4.34]{Hof}),
\item countably compact first countable (\cite[Corollary 5]{cc})
\item sequentially compact (\cite[Theorem 6]{can}),
\item commutative locally compact pseudocompact  with open shifts (\cite[Corollary 10]{forum}),
\item commutative and  locally compact connected with open shifts (\cite[Theorem 4]{axioms}), or
\item commutative feebly compact first countable regular with open shifts 
(\cite[Theorem 3]{axioms}). 
\end{enumerate}

We now increase this list of results and include the case in which $S$ is an open pseudocompact submoniod or a open subsemigroup of a topological group.

\section{Main Results}

Every space  considered in this section is assumed to be $T_2$.
\noindent
\begin{defi}\label{111}
A subset $A$ of a topological  group  $G$  is precompact  in $G$, if given $V\in N(e)$ there exists a finite set $K\subseteq G$ such that $A\subseteq KV$ and $A\subseteq VK$.
\end{defi}

From Definition \ref{111} we obtain that compact subsets of a topological group are precompact. \cite[Theorem 3.7.2]{Ar} establishes  that a pseudocompact topological group is precompact. The following theorem extends this result to a class of to\-po\-lo\-gi\-cal semigroups.

\begin{lemma}\label{810}Let $G$ be  a topological group. Then each open pseudocompact  submonoid of $G$ is precompact.
\end{lemma}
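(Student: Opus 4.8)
The plan is to prove the two halves of precompactness, namely $S\subseteq KV$ and $S\subseteq VK$, separately and then take the union of the two finite sets obtained. Fix $V\in N(e)$. Since $S$ is an open submonoid, $e\in S$ and $S$ is a neighborhood of $e$, so $S\cap V$ is an open neighborhood of $e$; using continuity of multiplication and inversion I would choose a symmetric open $W\in N_s(e)$ with $W\subseteq S$ and $W^4\subseteq S\cap V\subseteq V$. The governing principle throughout is that a Tychonoff pseudocompact space is feebly compact, so it cannot carry an infinite \emph{locally finite} family of nonempty open sets; the whole argument is arranged to manufacture exactly such a forbidden family inside $S$ unless finitely many translates already cover.

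For the inclusion $S\subseteq KV$, I would run a greedy selection inside $S$: pick $x_1\in S$ arbitrarily, and having chosen $x_1,\dots,x_n\in S$, pick $x_{n+1}\in S\smallsetminus\bigcup_{i\le n}x_iV$ if such a point exists. If the procedure stops after $m$ steps, then $S\subseteq\bigcup_{i\le m}x_iV=KV$ with $K=\{x_1,\dots,x_m\}$ finite, which is what I want. If it never stops, I obtain an infinite sequence $(x_n)$ in $S$ with $x_i^{-1}x_j\notin V$ whenever $i<j$. The key computation is then that the family $\{x_iW\}_{i\in\N}$ is discrete in $S$: each $x_i\in S$ and $W\subseteq S$ give $x_iW\subseteq SS\subseteq S$, so every $x_iW$ is a nonempty open subset of the open set $S$; and if for some $g\in S$ the neighborhood $gW$ met both $x_iW$ and $x_jW$ with $i<j$, a short manipulation would place $x_i^{-1}x_j$ in $W^4\subseteq V$, contradicting the choice of the sequence. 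Hence $gW$ meets at most one member, the family is discrete, and its existence contradicts the feeble compactness of the pseudocompact $S$. Therefore the greedy process must terminate and $S\subseteq KV$.

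The main obstacle to watch is that feeble compactness is \emph{not} violated by an arbitrary infinite pairwise disjoint family of open sets (compact spaces such as $\beta\N$ carry such families); it is precisely the \emph{discreteness} (hence local finiteness) secured by the estimate $W^4\subseteq V$ that forces the contradiction, so the exponent $4$ rather than $2$ is essential. Equally essential is that the constructed family genuinely lives in the pseudocompact space: this is exactly where the hypotheses $e\in S$ (to locate $W$ around $e$ inside $S$), $SS\subseteq S$ (to keep the translates $x_iW$ within $S$) and the openness of $S$ (to make them open in $S$) are all used. Finally, the inclusion $S\subseteq VK$ follows by the symmetric construction, choosing $x_{n+1}\in S\smallsetminus\bigcup_{i\le n}Vx_i$ and arguing discreteness of $\{Wx_i\}$ inside $S$ in the same way; combining the two finite sets yields a single finite $K$ with $S\subseteq KV$ and $S\subseteq VK$, proving that $S$ is precompact.
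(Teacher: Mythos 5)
Your proof is correct, but it follows a genuinely different route from the paper's. The paper reduces to the group case: it sets $H=S\cap S^{-1}$, which is an open subgroup of $S$; the cosets $sH$, $s\in S$, partition $S$ into open closed pieces, and pseudocompactness forces this partition to be finite; each coset is then a retract of $S$, hence pseudocompact, so $H$ is a pseudocompact topological group, and the paper invokes \cite[Theorem 3.7.2]{Ar} (pseudocompact groups are precompact) together with invariance of precompactness under translation and finite unions. You instead work directly inside the monoid and reprove the precompactness from scratch: the greedy selection of points $x_i$ with $x_i^{-1}x_j\notin V$, the estimate $W^4\subseteq V$ making $\{x_iW\}$ a discrete (not merely disjoint) family of nonempty open subsets of $S$, and the contradiction with feeble compactness is exactly the classical argument behind Theorem 3.7.2, transplanted from groups to the open submonoid; your bookkeeping of where $e\in S$, $SS\subseteq S$ and openness of $S$ enter (to anchor $W$, to keep the translates $x_iW$ and the witnessing neighborhoods $gW$ inside $S$) is accurate, and your caveat that disjointness alone would not suffice is well taken. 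What each approach buys: yours is self-contained and makes clear that no appeal to the structure theory of pseudocompact groups is needed, only feeble compactness plus the algebra of the monoid; the paper's is shorter modulo the cited machinery and yields extra structural information (that $S$ decomposes into finitely many cosets of the open pseudocompact subgroup $S\cap S^{-1}$), a device that reappears later in the paper, e.g.\ in the proof of Theorem \ref{2012}. The only cosmetic slip in your write-up is the claim that $S\cap V$ is open (the $V\in N(e)$ in Definition \ref{111} need not be open); replacing $V$ by its interior, or noting that it suffices to verify precompactness for open $V$, fixes this at no cost.
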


\begin{proof}Let $S$ a pseudocompact open monoid of $G$, then $H=S\cap S^{-1}$ is an open subgroup of $S$. Since $S=\bigcup_{s\in S} sH$, the pseudocompactness of $S$ implies that there exist $s_1,s_2,...,s_n \in S$ such that $\{s_1H, s_2H,..., s_nH\}$ is a partition of $S$, formed by open closed subsets of $S$. Thus each $s_iH$,  is a retract of $S$, then for $i=1,2,...,n$, we have that $s_iH$ is pseudocompact. Since the continuous images of a pseudocompact space is pseudocompact, we have that  $H=l_{s_1^{-1}}(s_1H)$ is pseudocompact too. 	From  \cite[Theorem 3.7.2]{Ar} we have that $H$ is precompact. Since the unipunctual sets are precompact, from \cite[Corollary 3.7.11]{Ar} we have that $s_iH$ is precompact  for each $i=1,2,...,n$. Thus $S$ is precompact for being finite union of precompact sets.
\end{proof}

\begin{lemma}\label{1810}(\cite[Lemma 3.7.26]{Ar})Let $A$ be a precompact subset of a topological group  $G$. If $\{x_n:n\in \omega\}\subseteq A$, then $e_G\in \overline{\{x_ix_j^{-1}:i<j, i,j\in \omega\}}$.
\end{lemma}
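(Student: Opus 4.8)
The plan is to argue by a pigeonhole principle applied to the two-sided precompactness of $A$, exploiting the fact that a right translate cancels cleanly in the product $x_ix_j^{-1}$. To show $e_G\in\overline{\{x_ix_j^{-1}:i<j,\ i,j\in\omega\}}$, I will fix an arbitrary neighborhood $W\in N(e)$ and produce indices $i<j$ with $x_ix_j^{-1}\in W$.

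First I would use continuity of the map $(x,y)\mapsto xy^{-1}$ at $(e_G,e_G)$ (equivalently, continuity of multiplication and inversion) to choose a symmetric $V\in N_s(e)$ with $VV^{-1}=VV\subseteq W$. Then I would apply precompactness of $A$ in the form $A\subseteq VK$ from Definition \ref{111}, obtaining a finite set $K=\{a_1,\dots,a_n\}\subseteq G$ such that $A\subseteq\bigcup_{l=1}^{n}Va_l$. Since each $x_k$ lies in $A$, I may pick for every $k\in\omega$ an index $g(k)\in\{1,\dots,n\}$ with $x_k\in Va_{g(k)}$; this defines a function $g\colon\omega\to\{1,\dots,n\}$ from an infinite set to a finite one.

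By the pigeonhole principle some value $m$ is attained infinitely often, so in particular there exist $i<j$ with $g(i)=g(j)=m$. Writing $x_i=v_ia_m$ and $x_j=v_ja_m$ with $v_i,v_j\in V$, the translate $a_m$ cancels:
\[
x_ix_j^{-1}=v_ia_m(v_ja_m)^{-1}=v_ia_ma_m^{-1}v_j^{-1}=v_iv_j^{-1}\in VV^{-1}\subseteq W.
\]
Thus $W$ meets $\{x_ix_j^{-1}:i<j\}$, and as $W\in N(e)$ was arbitrary, $e_G$ lies in the closure of this set.

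The only point requiring care is the choice between the two halves of Definition \ref{111}: using $A\subseteq KV$ instead would yield $x_ix_j^{-1}=a_mv_iv_j^{-1}a_m^{-1}$, a conjugate rather than an element of $VV^{-1}$, which need not lie in $W$. It is precisely the left-translate form $A\subseteq VK$ that makes the shared factor cancel in $x_ix_j^{-1}$, so identifying the right half of the precompactness hypothesis is the expected (minor) obstacle. No difficulty arises from repetitions among the $x_n$, since the pigeonhole argument is applied to the infinite index set $\omega$, not to the (possibly finite) set of values it takes.
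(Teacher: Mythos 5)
Your proof is correct. Note that the paper itself offers no proof of this lemma --- it is quoted directly from \cite[Lemma 3.7.26]{Ar} --- and your pigeonhole argument is exactly the standard proof of that result: choosing a symmetric $V$ with $VV\subseteq W$, covering $A$ by finitely many sets $Va_l$ using the left-translate half $A\subseteq VK$ of Definition \ref{111} (which, as you rightly observe, is the half that makes the common factor cancel in $x_ix_j^{-1}$), and extracting two indices $i<j$ in the same piece.
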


Compact subsemigroups of  topological groups are subgroups, the following proposition tell us that compactness can be weakened to precompactness.

\begin{prop}\label{8102}Let $S$ be a precompact subsemigroup of topological group $G$, then $\overline{S}$ is  a precompact subgroup of $G$.
\end{prop}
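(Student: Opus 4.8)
The plan is to show that the closure $\overline{S}$, which is already a precompact subsemigroup by continuity of multiplication, is closed under inversion. Since a subsemigroup that is closed under taking inverses and contains the identity is a subgroup, the two things I need to establish are: first, that the neutral element $e_G$ lies in $\overline{S}$, and second, that $x \in \overline{S}$ implies $x^{-1} \in \overline{S}$. I expect Lemma \ref{1810} to be the main engine driving both facts, since it converts the combinatorial content of precompactness into a statement about accumulation of ``difference'' elements $x_i x_j^{-1}$ at the identity.

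First I would prove that $e_G \in \overline{S}$. Pick any $s \in S$ and consider the sequence $\{s^n : n \in \omega\} \subseteq S$, which is legitimate since $S$ is a subsemigroup and hence contains all positive powers of $s$. Applying Lemma \ref{1810} to this sequence gives $e_G \in \overline{\{s^i (s^j)^{-1} : i < j\}}$. For $i < j$ we have $s^i (s^j)^{-1} = (s^{j-i})^{-1}$, so every element in that accumulation set is the inverse of a positive power of $s$; in particular $e_G$ is a limit of points of the form $(s^k)^{-1}$ with $k \geq 1$. This already suggests the route to inversion: if I can show the inverses $(s^k)^{-1}$ interact correctly with $\overline{S}$, I can leverage the approximation of $e_G$.

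Next I would establish closure under inversion, which I expect to be the crux. Given $x \in \overline{S}$, I want $x^{-1} \in \overline{S}$. The idea is to use the powers of $x$: since $\overline{S}$ is a subsemigroup, $x^n \in \overline{S}$ for all $n \geq 1$, and the sequence $\{x^n\}$ lies in the precompact set $\overline{S}$ (the closure of a precompact set is precompact). By Lemma \ref{1810} applied in $\overline{S}$, we get $e_G \in \overline{\{x^{i-j} \cdot x \cdot x^{-1} \cdots\}}$—more carefully, $e_G \in \overline{\{(x^{j-i})^{-1} : i<j\}}$, so there is a net of integers $k_\alpha \geq 1$ with $(x^{k_\alpha})^{-1} \to e_G$, equivalently $x^{k_\alpha} \to e_G$. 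Writing $x^{-1} = \lim_\alpha x^{k_\alpha - 1}$ (using that $x^{k_\alpha} \to e_G$ and multiplying by the fixed continuous inverse/shift), and noting $x^{k_\alpha - 1} \in \overline{S}$ whenever $k_\alpha \geq 1$ (interpreting $x^0 = e_G \in \overline{S}$ once $e$ is in), we obtain $x^{-1}$ as a limit of points of $\overline{S}$, hence $x^{-1} \in \overline{S}$.

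The main obstacle I anticipate is the bookkeeping in the inversion step: Lemma \ref{1810} yields accumulation along a \emph{net} of index pairs rather than a single convergent sequence, so I must argue carefully that the resulting net $x^{k_\alpha - 1}$ genuinely converges to $x^{-1}$ and stays inside $\overline{S}$, using continuity of the group operations to pass from $x^{k_\alpha} \to e_G$ to $x^{k_\alpha} \cdot x^{-1} = x^{k_\alpha - 1} \to x^{-1}$. I would handle the degenerate case $k_\alpha = 1$ (where $x^{k_\alpha-1} = e_G$) by appealing to the first part, so that $e_G \in \overline{S}$ is available. Once inversion and the presence of $e_G$ are secured, $\overline{S}$ is a subgroup, and its precompactness is inherited from $S$ via the standard fact that closures of precompact sets are precompact, completing the proof.
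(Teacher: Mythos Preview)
Your approach is essentially the same as the paper's: apply Lemma~\ref{1810} to the powers of a fixed element, use the inversion homeomorphism to turn the conclusion into $e_G\in\overline{\{x^k:k\geq 1\}}\subseteq\overline{S}$, and then use the left shift $l_{x^{-1}}$ to obtain $x^{-1}\in\overline{\{x^{k-1}:k\geq 1\}}\subseteq\overline{S}$. The paper phrases this via closures and the identity $l_{x^{-1}}(\overline{A})=\overline{l_{x^{-1}}(A)}$, while you phrase it with nets, but the content is identical.

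One small organizational wrinkle: your ``first part'' stops at $e_G\in\overline{\{(s^k)^{-1}:k\geq 1\}}$, which is \emph{not} yet $e_G\in\overline{S}$, and you later appeal back to this incomplete step for the degenerate case $k_\alpha=1$. The missing line is exactly the inversion trick you already use in the second part: since inversion is a homeomorphism fixing $e_G$, the statement $e_G\in\overline{\{(s^k)^{-1}\}}$ is equivalent to $e_G\in\overline{\{s^k\}}\subseteq\overline{S}$. Once you insert that sentence (or simply observe that $x^{k_\alpha}\to e_G$ with $x^{k_\alpha}\in\overline{S}$ already gives $e_G\in\overline{S}$, making the separate ``first part'' unnecessary), the argument is complete and matches the paper's.
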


\begin{proof}Let $x\in \overline{S}$. By \cite[Lemma 3.7.5]{Ar}  $\overline{S}$ is a precompact subsemigroup. Note that $\{x^{2}, x^{3},...\}\subseteq \overline{S}$. Then Lemma \ref{1810} implies that $e_G\in \overline{\{x^{-1}, x^{-2},...\}}$. Since the map $a\mapsto a^{-1}\colon G \to G$ is a homeomorphism, we have that $e_G=e_G^{-1}\in (\overline{\{x^{-1}, x^{-2},...\}})^{-1}=\overline{\{x^{1}, x^{2},...\}}\subseteq \overline{S}$, hence $\overline{\{e, x^{1}, x^{2},...\}}\subseteq \overline{S}$. Since the map $a\mapsto x^{-1}a\colon G\to G$,  is a homeomorphism, we have that $x^{-1}=l_{x^{-1}}(e_G)\in l_{x^{-1}}(\overline{\{x^{1}, x^{2},...\}})=\overline{\{e, x^{1}, x^{2},...\}}\subseteq \overline{S}$. The Lemma is proven
\end{proof}

\noindent\cite[Corollary 3.4]{tomita} establishes that  the closure of every subsemigroup of a precompact group is a subgroup. Since subsets of precompact sets are precompact, then this result is a consequence of Proposition \ref{8102}.\\\\
\noindent A subgroup $N$ of a group $G$ is called \emph{normal} or \emph{invariant} if $gNg^{-1}=N$, for each $g\in G$. Also, if $G$ is a topological group, a subgroup $H$ of $G$ is called \emph{admissible} if $H=\bigcap_{n\in \N} U_n$, where $U_n\in N_s(e)$ and $U_{n+1}^{2}\subseteq U_n$ for each $n\in N$. It was proven in \cite{pseudo} that any admissible subgroup of a topological group $G$ is closed in $G$.

\begin{theorem}\label{2810}Let $S$ be a precompact pseudocompact subsemigroup of a topological group $G$ containing an admissible normal subgroup of $G$. Then $S$ is a subgroup of $G$.
\end{theorem}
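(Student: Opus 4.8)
The plan is to show that the admissible normal subgroup $N \subseteq S$ forces $S$ to be a union of cosets of $N$, and then work in the quotient $G/N$ where the situation becomes more tractable. Since $N$ is admissible it is closed in $G$, and since it is normal the quotient map $\pi\colon G \to G/N$ is an open continuous homomorphism onto a topological group (which is $T_2$ because $N$ is closed). The first step is to observe that because $N \subseteq S$ and $N$ is a subgroup, $S$ is saturated with respect to $N$, i.e. $S = \pi^{-1}(\pi(S))$; indeed if $s \in S$ and $sN \ni s'$ then $s' = sn$ with $n \in N \subseteq S$, so $s' \in SS \subseteq S$. Hence $\pi(S)$ is a subsemigroup of $G/N$, and $S$ is a subgroup of $G$ if and only if $\pi(S)$ is a subgroup of $G/N$.

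Next I would transfer the hypotheses across $\pi$. Continuous images of pseudocompact spaces into Tychonoff spaces are pseudocompact (as recalled in the Preliminaries), and continuous images of precompact sets are precompact, so $\pi(S)$ is a pseudocompact precompact subsemigroup of $G/N$. The key gain is that the quotient $G/N$ should now be a group in which precompactness has sharper consequences: by Proposition \ref{8102}, $\overline{\pi(S)}$ is a precompact subgroup of $G/N$. So the remaining task is purely to upgrade $\pi(S)$ from ``its closure is a group'' to ``$\pi(S)$ is itself a group.'' This is exactly the point where pseudocompactness and the admissibility of $N$ must be used together, since precompactness alone only controls the closure.

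The mechanism I would use to close the gap is the standard interplay between a $G_\delta$ diagonal-type condition and pseudocompactness: a pseudocompact space in which points are $G_\delta$ is such that dense subsets behave rigidly. Concretely, because $N = \bigcap_{n} U_n$ is admissible, the neutral element $e_{G/N} = \pi(e_G)$ is a $G_\delta$ point of $G/N$ (the images $\pi(U_n)$ form a decreasing base around it), and in a topological group $G_\delta$ at one point means $G/N$ is first countable, even metrizable by Birkhoff--Kakutani. A pseudocompact metrizable space is compact. Therefore $\pi(S)$ is a pseudocompact, hence compact, subsemigroup of the metrizable group $G/N$; by the classical fact that compact subsemigroups of topological groups are subgroups (item (i) in the Preliminaries list, \cite[Proposition A4.34]{Hof}), $\pi(S)$ is a subgroup of $G/N$. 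Pulling back through the saturation $S = \pi^{-1}(\pi(S))$ then gives that $S$ is a subgroup of $G$, completing the proof.

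The main obstacle I anticipate is justifying that $\pi(S)$ is genuinely \emph{compact} rather than merely pseudocompact with compact closure: one must verify that $\pi(S)$ is closed in its closure, or equivalently that the metrizability-plus-pseudocompactness argument applies to $\pi(S)$ itself and not just to $\overline{\pi(S)}$. The cleanest route is to first establish metrizability of $G/N$ from admissibility of $N$, then note that a pseudocompact subspace of a metric space is compact and hence closed, so $\pi(S) = \overline{\pi(S)}$ is the precompact subgroup furnished by Proposition \ref{8102}. Getting the admissibility condition $U_{n+1}^2 \subseteq U_n$ to deliver a metric (or at least a countable neighborhood base giving first countability) on the quotient is the delicate step, and I would lean on the Birkhoff--Kakutani construction applied to the $\pi(U_n)$.
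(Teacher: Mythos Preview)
Your overall strategy matches the paper's: pass to the quotient by $N$, argue that $\pi(S)$ is pseudocompact and lives in a metrizable group, conclude that $\pi(S)$ is compact, apply \cite[Proposition~A4.34]{Hof}, and pull back via the saturation $S=\pi^{-1}(\pi(S))$. The pull-back step and the use of Proposition~\ref{8102} are exactly as in the paper.

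There is, however, a genuine gap precisely where you flagged the ``delicate step.'' From the admissibility of $N$ in $G$ you correctly obtain that $\{e_{G/N}\}=\bigcap_n \pi(U_n)$, so $e_{G/N}$ is a $G_\delta$ point; but your inference ``in a topological group, a $G_\delta$ identity implies first countability (hence metrizability by Birkhoff--Kakutani)'' is false for arbitrary topological groups. The Birkhoff--Kakutani pseudometric built from the $U_n$ is continuous, so it induces a topology \emph{coarser} than the quotient topology on $G/N$, and nothing forces the two to coincide. Equivalently, the $\pi(U_n)$ need not form a neighbourhood \emph{base} at $e_{G/N}$: countable pseudocharacter does not imply countable character in topological groups (e.g.\ the free topological group on a nondiscrete convergent sequence). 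So the step ``$G/N$ is metrizable'' does not go through as stated, and without it you cannot conclude that the pseudocompact set $\pi(S)$ is compact.

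The fix --- and this is exactly what the paper does --- is to run the argument inside $\overline{S}$ rather than inside $G$. By Proposition~\ref{8102} and the fact that closures of pseudocompact subspaces are pseudocompact, $\overline{S}$ is a precompact \emph{pseudocompact} topological group containing $N$ as a closed admissible normal subgroup. The paper then invokes \cite[Corollary~2.3.13]{pseudo} to get that $\overline{S}/N$ is compact metrizable; if you want to avoid that citation, the point is that in a \emph{pseudocompact} Tychonoff group a $G_\delta$ identity does force first countability (a nested sequence of regular-closed neighbourhoods with trivial intersection must be a local base, else one manufactures a locally finite sequence of nonempty open sets). Once $\overline{S}/N$ is metrizable, $\pi(S)$ is a pseudocompact subspace of a metric space, hence compact, and the remainder of your argument applies verbatim.
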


\begin{proof}Let $N$ be an admissible subgroup of $G$ such that $N\subseteq S$. Now,  $\overline{S}$ is a precompact and pseuducompact subgroup by Proposition \ref{8102}, since $N$ is closed in $G$, then  \cite[Corollary 2.3.13]{pseudo} implies that $\overline{S}/N$ is a compact subgroup of the $T_2$ group  with a countable base $G/N$, particularly $\overline{S}/N$ is metrizable. If $\pi \colon \overline{S}\longrightarrow \overline{S}/N$ is the respective quotient mapping, then $\pi(S)$ is a pseudocompact and metrizable subsemigroup of $ \overline{S}/N$. From \cite[Corollary 1.1.13]{pseudo} we have that $\pi(S)$ is a compact subsemigroup of $\overline{S}/N$, hence $\pi(S)$ is a subgroup according to \cite[Proposition A4.34]{Hof}. Let us see $S$ is a subgroup. Indeed let $x\in S$, then $x^{-1}N=(xN)^{-1}\in S/N$, this means that there exists  $s\in S$ such that $x^{-1}N=sN$, therefore $x^{-1}\in sN\subseteq sS\subseteq S$, this proves that $S$ is a subgroup of $\overline{S}$.
\end{proof}

\begin{prop}\label{910}
Let $G$ be a topological group and let $P$ be   $G_\delta$ in $G$ such that $e_G\in P$. Then any precompact pseudocompact subsemigroup of $G$ containing $P$ is a subgroup of $G$.
\end{prop}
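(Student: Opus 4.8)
The plan is to reduce Proposition~\ref{910} to the situation of Theorem~\ref{2810} by manufacturing, from the $G_\delta$ set $P$, an \emph{admissible} subgroup of $G$ sitting inside the given subsemigroup. Let $S$ be a precompact pseudocompact subsemigroup containing $P$, where $P$ is a $G_\delta$ set with $e_G\in P$. Write $P=\bigcap_{n\in\N}W_n$ with each $W_n$ open and $e_G\in W_n$. The first step is a standard regularity-and-continuity argument: using the continuity of multiplication and inversion at $e_G$, I would recursively choose symmetric neighborhoods $U_n\in N_s(e)$ with $U_1\subseteq W_1$ and, at stage $n+1$, with $U_{n+1}^2\subseteq U_n$ together with $U_{n+1}\subseteq W_{n+1}$. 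This is possible because for any $V\in N(e)$ there is a symmetric $U\in N_s(e)$ with $U^2\subseteq V$.

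Having built the chain, I set $H=\bigcap_{n\in\N}U_n$. By the very definition quoted before Theorem~\ref{2810}, $H$ is an admissible subgroup of $G$, hence closed in $G$ by the result of \cite{pseudo}. The key containment is $H\subseteq P$: indeed $H\subseteq\bigcap_n W_n=P$, since $U_n\subseteq W_n$ for each $n$. Therefore $H\subseteq P\subseteq S$, so $S$ is a precompact pseudocompact subsemigroup of $G$ containing the admissible subgroup $H$.

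The remaining gap is that Theorem~\ref{2810} requires the admissible subgroup to be \emph{normal} in $G$, whereas the $H$ constructed above need not be normal. I expect this to be the main obstacle. One clean way to handle it is to build $H$ so that each $U_n$ is invariant under conjugation, i.e. to choose at each stage an \emph{invariant} symmetric neighborhood; this is where I would need to invoke that $G$ admits a base at $e_G$ of conjugation-invariant neighborhoods, or else argue directly that the symmetrized, conjugation-stable core $\bigcap_{g\in G} gU_n g^{-1}$ still contains a neighborhood of $e_G$. In a general topological group this core need not be a neighborhood, so the honest route is to replace $H$ by a normal admissible subgroup contained in it. If such a normalization is available—e.g. because one may intersect the $U_n$ with the open subgroup generated near $e$, or because the hypotheses implicitly supply invariance—then $H$ (or its normal refinement) is an admissible normal subgroup of $G$ with $H\subseteq S$.

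Once a normal admissible subgroup $H\subseteq S$ is in hand, the proof concludes immediately by citation: $S$ is a precompact pseudocompact subsemigroup of $G$ containing the admissible normal subgroup $H$, so Theorem~\ref{2810} applies verbatim and yields that $S$ is a subgroup of $G$. Thus the entire content of Proposition~\ref{910} lies in passing from an arbitrary $G_\delta$ neighborhood of $e_G$ to an admissible (normal) subgroup inside it; the rest is a direct appeal to the preceding theorem.
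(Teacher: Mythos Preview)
Your diagnosis is accurate: the recursion produces an admissible subgroup $H\subseteq P\subseteq S$, but normality of $H$ in $G$ is genuinely unavailable in an arbitrary topological group, and your conditional ``if such a normalization is available'' does not close the argument. The conjugation core $\bigcap_{g\in G}gU_ng^{-1}$ need not be a neighborhood of $e$, so as written the proof has a real gap at exactly the place you flag.

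The paper resolves this by changing the ambient group before invoking Theorem~\ref{2810}. Since $S$ is precompact, Proposition~\ref{8102} makes $\overline{S}$ a precompact \emph{group}; the set $P$ is then a $G_\delta$ in $\overline{S}$ containing the identity, and one obtains inside $P$ an admissible subgroup that is normal \emph{in $\overline{S}$} (the paper cites \cite[Lemma~2.3.9]{pseudo}; concretely, precompactness of $\overline{S}$ supplies a base of conjugation-invariant neighborhoods at $e$ via \cite[Lemma~3.7.7]{Ar}, so your recursion can be run inside $\overline{S}$ with each $U_n$ chosen invariant there). Now apply Theorem~\ref{2810} with $\overline{S}$ in the role of the ambient group: $S$ is a precompact pseudocompact subsemigroup of the topological group $\overline{S}$ containing an admissible normal subgroup of $\overline{S}$, hence $S$ is a subgroup of $\overline{S}$, and therefore of $G$. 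The single missing idea is this passage to $\overline{S}$; once you make it, your outline goes through verbatim.
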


\begin{proof}If $S$ is a subsemigroup such that $P\subseteq S\subseteq \overline S$, then $P$ is  $G_\delta$ in  $\overline{S}$, which is a group by Proposition \ref{8102}. From \cite[Lemma 2.3.9]{pseudo} $P$ contains a normal admissible subgroup of $\overline{S}$. Then the Theorem \ref{2810} implies that $S$ is a subgroup of $G$.
\end{proof}

\begin{theorem}\label{2910}Each open pseudocompact  submonid of a topological group $G$ is a precompact closed subgroup of $G$.
\end{theorem}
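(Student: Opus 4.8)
The plan is to combine the lemmas and propositions already established. Let $S$ be an open pseudocompact submonoid of $G$. I would first invoke Lemma \ref{810} to conclude that $S$ is precompact, so that $S$ is simultaneously precompact and pseudocompact. The strategy is then to produce inside $S$ a distinguished $G_\delta$ subset of $G$ containing $e_G$, so that Proposition \ref{910} applies directly and yields that $S$ is a subgroup of $G$.

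The key step is therefore to exhibit such a $G_\delta$ set. Here the open subgroup $H = S \cap S^{-1}$ introduced in the proof of Lemma \ref{810} is the natural candidate: since $S$ is open and contains $e_G$, the set $H$ is an open subgroup of $G$ contained in $S$. What I would want is that $H$ (or some subset of it) is a $G_\delta$ in $G$ containing $e_G$. If $H$ itself is $G_\delta$ in $G$, then Proposition \ref{910} finishes the argument immediately, giving that $S$ is a subgroup of $G$. Once $S$ is known to be a subgroup, closedness follows: a subgroup that is open is automatically closed (its complement is a union of cosets, each open), and precompactness has already been secured by Lemma \ref{810}. Thus the three conclusions — precompact, closed, subgroup — are all accounted for.

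The main obstacle I anticipate is justifying that the required $G_\delta$ set sits inside $S$. An arbitrary open subgroup of a topological group need not be $G_\delta$, so one cannot simply take $H$ without further input; the pseudocompactness of $S$ should be what rescues this. Concretely, I would exploit that a pseudocompact space cannot be partitioned into infinitely many nonempty open sets (its cellularity is countable, indeed finite for clopen partitions), which forces the cosets of $H$ in $S$ to be finite in number, as was already used in Lemma \ref{810}. This finiteness, together with openness of $H$, should let me find a decreasing sequence of symmetric neighborhoods whose intersection is trapped inside $S$ and contains $e_G$, producing the desired $G_\delta$. The delicate point is ensuring the intersection lands inside $S$ rather than merely inside $\overline{S}$, and here I would lean on the openness of $S$ to choose the first neighborhood $U_1$ with $U_1 \subseteq S$, after which any $G_\delta$ built from a decreasing admissible chain starting below $U_1$ automatically lies in $S$.

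Having obtained a $G_\delta$ set $P$ with $e_G \in P \subseteq S$, the conclusion is purely mechanical: Proposition \ref{910} gives that $S$ is a subgroup of $G$, Lemma \ref{810} gives precompactness, and the openness of the subgroup $S$ gives closedness. I expect the entire difficulty to be concentrated in the single step of manufacturing $P$, and everything else to be a citation of results already proved in the excerpt.
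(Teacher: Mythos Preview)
Your overall architecture matches the paper's proof exactly: invoke Lemma~\ref{810} for precompactness, then apply Proposition~\ref{910} to get that $S$ is a subgroup, and finally use that open subgroups are closed. The paper does precisely this (passing first to the group $\overline{S}$ via Proposition~\ref{8102}, though this is not strictly necessary).

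However, the ``main obstacle'' you identify is illusory, and your sentence ``An arbitrary open subgroup of a topological group need not be $G_\delta$'' is simply false. Every open set $U$ in any space is trivially $G_\delta$: take $U=\bigcap_{n\in\N} U$. Since $S$ is open in $G$ and $e_G\in S$, you may take $P=S$ itself in Proposition~\ref{910} and be done immediately; there is no need to pass to $H=S\cap S^{-1}$, no need to invoke finiteness of the coset partition, and no need to build a descending admissible chain. Your proposed workaround (start an admissible chain with $U_1\subseteq S$) is valid but entirely unnecessary. Strip out that paragraph and your proof is the paper's proof.
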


\begin{proof}
Let $S$ be an open pseudocompact monoid of a topological group $G$. From Lemma \ref{810}  and Proposition \ref{8102}  we have that $S$ is a precompact pseudocompact open submonoid of the group $\overline{S}$. The Proposition \ref{910} implies that $S$ is a subgroup of $\overline{S}$, since open subgroups  of topological groups are closed, then $S$ is closed.
\end{proof}

Let $S$ be a topological semigroup. We say that $S$ has \emph{continuous division} if give $x,y\in S$ and an open set $V$ in $S$, containing $y$, there are open sets in $S$, $U$, and $W$, containing $x$ and $xy$ ($yx$), respectively; such that $W\subseteq \bigcap\{uV:u\in U\}$ ($W\subseteq \bigcap\{Vu:u\in U\}$). From \cite[Theorem 1.15 and Theorem 1.19]{hom} we know that if $S$ is a commutative cancellative topological semigroup with open shifts and continuous division, then $S$ can be embedded in a topological group $G$. Therefore, if additionally  $S$ is pseudocompact, then the Theorem \ref{2910} tell us that $S$ is a group.

\begin{coro}\label{1010}Let $G$ be a locally compact topological group and let $S$ be an open pseudocompact  submonoid of $G$. Then $S$ is a compact subgroup of $G$.
\end{coro}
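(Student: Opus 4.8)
The plan is to build directly on Theorem \ref{2910}, which already does the substantial work. First I would invoke Theorem \ref{2910} to conclude that $S$ is a precompact closed subgroup of $G$; thus only the upgrade from precompact-and-closed to compact remains, and this is precisely where local compactness of $G$ enters the argument.

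The key structural fact I would use is that every locally compact topological group is Raikov complete, i.e.\ complete with respect to its two-sided group uniformity (see \cite{Ar}). Since a closed subspace of a complete uniform space is itself complete, and a subgroup carries the uniformity induced from the ambient group, the closed subgroup $S$ of the complete group $G$ is a complete topological group in its own right.

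I would then combine completeness with precompactness. A topological group is compact exactly when it is both precompact and Raikov complete: the Raikov completion of a precompact group is compact, and a complete group coincides with its own completion. Applying this characterization to $S$—which is precompact by Theorem \ref{2910} and complete by the previous step—shows that $S$ is compact, and being a subgroup of $G$ it is a compact subgroup of $G$, as claimed.

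The only delicate point, rather than a genuine obstacle, is bookkeeping about which uniformity is intended: completeness must be taken in the sense of the two-sided (Raikov) uniformity, for which locally compact groups are indeed complete, so that the characterization of compactness as precompactness together with completeness applies verbatim. Everything else reduces to a direct citation of Theorem \ref{2910} together with standard completeness results for topological groups found in \cite{Ar}.
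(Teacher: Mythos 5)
Your proof is correct, but it takes a genuinely different route from the paper's. The paper argues via pseudocompactness: it first notes that $S=Cl_G(S)\cap S$ is locally compact as a subspace of $G$ (citing \cite[Corollary 3.3.10]{En}), then uses Theorem \ref{2910} to see that $S$ is a pseudocompact subgroup, and finally cites the classical fact that a locally compact pseudocompact topological group is compact \cite[Theorem 2.3.2]{pseudo}. You instead discard pseudocompactness entirely once Theorem \ref{2910} has been invoked, retaining only that $S$ is a closed precompact subgroup, and you finish with completeness theory: locally compact groups are Raikov complete, closed subgroups inherit Raikov completeness (the two-sided uniformity of a subgroup is the one induced from the ambient group, and closed subspaces of complete uniform spaces are complete), and a topological group is compact exactly when it is precompact and Raikov complete; all of these are standard facts available in \cite{Ar}. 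Both arguments are sound, and your bookkeeping remark about which uniformity is used is exactly the right point to flag. Your route buys generality: since it never uses pseudocompactness after the first step, it actually proves that every closed precompact subgroup of a locally compact group is compact, so combined with Theorem \ref{2012} it yields the stronger conclusion that every \emph{open precompact subsemigroup} of a locally compact group is a compact subgroup. The paper's route, by contrast, stays within the pseudocompactness toolkit it relies on elsewhere (\cite{pseudo}) and avoids importing the machinery of group completions.
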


\begin{proof}
Since $Cl_G(S)\cap S=S$,  from Corollary 3.3.10 of \cite{En} we have that $S$ is locally compact. Moreover, from Theorem \ref{2910} $S$ is a pseudompact subgroup of $G$. But from Theorem 2.3.2 of \cite{pseudo} we have that each locally compact and pseudocompact group is  compact. Therefore $S$ is a compact subgroup.
\end{proof}

\begin{lemma}(\cite[Lemma 3.7.7]{Ar})Let $B$ be a precompact subset of a topological group $G$. Then, for every neighbourhood $U$ of the identity $e$ in $G$, there exists a neighbourhood $V$ of $e$ in $G$ such that $bVb^{-1}\subseteq U$, for each $b\in  B$.
\end{lemma}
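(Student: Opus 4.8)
The plan is to tame the unruly family of conjugations $\{b(\cdot)b^{-1}:b\in B\}$, which is indexed by the possibly non‑compact set $B$, by using precompactness to replace $B$ with a finite ``anchor set'' and then absorbing the leftover small factor into a symmetric cube. First, given $U\in N(e)$, I would invoke the continuity of multiplication at $(e,e,e)$ to choose a symmetric $W\in N_s(e)$ with $W^3\subseteq U$. Next, applying Definition \ref{111} with $V:=W$, precompactness of $B$ furnishes a finite set $F=\{a_1,\dots,a_n\}\subseteq G$ with $B\subseteq WF=\bigcup_{i=1}^{n}Wa_i$. Finally, since conjugation by each fixed $a_i$ is a homeomorphism of $G$ fixing $e$, the set $a_i^{-1}Wa_i$ is a neighbourhood of $e$, and I would set $V=\bigcap_{i=1}^{n}a_i^{-1}Wa_i\in N(e)$, so that $a_iVa_i^{-1}\subseteq W$ for every $i$.

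The verification then runs as follows. Fix $b\in B$ and write $b=wa_i$ with $w\in W$ and $a_i\in F$, which is possible because $B\subseteq WF$. Then $bVb^{-1}=w\,(a_iVa_i^{-1})\,w^{-1}\subseteq wWw^{-1}$, and since $W$ is symmetric we have $w^{-1}\in W$, whence $wWw^{-1}\subseteq W\cdot W\cdot W=W^3\subseteq U$. As $b\in B$ was arbitrary, this gives $bVb^{-1}\subseteq U$ for all $b\in B$, which is exactly the claim.

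The hard part is conceptual rather than computational: because $B$ is in general neither compact nor locally compact, one cannot simply appeal to uniform continuity of the conjugation map $(x,y)\mapsto xyx^{-1}$ on a set of the form $B\times G$ to extract a single $V$ working for all $b$ at once. Precompactness is precisely the hypothesis that repairs this, since it produces the finite net $F$ over which a finite intersection $V$ is legitimately a neighbourhood of $e$; the symmetric choice $W^3\subseteq U$ is then what tolerates the residual factor $w\in W$ arising from the covering, so that the perturbation of $W$ under conjugation by $w$ stays trapped inside $U$. Everything else is routine bookkeeping with neighbourhoods of the identity.
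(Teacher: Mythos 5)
Your proof is correct. Note that the paper itself gives no argument for this lemma---it is quoted verbatim with the citation \cite[Lemma 3.7.7]{Ar}---so there is no internal proof to compare against; your argument (symmetric $W$ with $W^3\subseteq U$, a finite set $F$ with $B\subseteq WF$ from Definition \ref{111}, then $V=\bigcap_{i=1}^{n}a_i^{-1}Wa_i$ and the conjugation estimate $bVb^{-1}\subseteq wWw^{-1}\subseteq W^3$) is precisely the standard proof found in the cited reference, and every step checks out.
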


In the following theorem we extend the Theorem \ref{2910} and we used precompactness instead of pseudocompactness, but we have weakened to be submonoid to subsemigroup.

\begin{theorem}\label{2012}
Each open precompact subsemigroup of a topological group $G$ is a closed subgroup of $G$.
\end{theorem}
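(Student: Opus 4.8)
The plan is to reduce the subsemigroup case to the submonoid case already handled in Theorem~\ref{2910}, by showing that an open precompact subsemigroup $S$ automatically contains the neutral element $e_G$. First I would observe that, by \cite[Lemma 3.7.5]{Ar}, $\overline{S}$ is a precompact subsemigroup, and by the continuity of multiplication it is a subsemigroup of $G$; the goal is to prove $e_G\in S$, after which $S$ becomes an open precompact submonoid and I can invoke Proposition~\ref{8102} together with the earlier machinery to conclude that $\overline{S}$ is a subgroup and that $S$ itself is a closed subgroup.

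The key step is to produce the identity inside $S$. I would fix any $x\in S$ and consider the sequence $\{x^n:n\in\N\}\subseteq S$, which is legitimate since $S$ is a subsemigroup. Applying Lemma~\ref{1810} to this countable subset of the precompact set $\overline{S}$ yields $e_G\in\overline{\{x^i x^{-j}:i<j\}}=\overline{\{x^{-k}:k\in\N\}}$, so as in the proof of Proposition~\ref{8102} one gets $e_G\in\overline{\{x^k:k\in\N\}}\subseteq\overline{S}$. This places $e_G$ in the closure of $S$, but to land it inside $S$ itself I would use that $S$ is \emph{open}: the accumulation of the powers $x^k$ at $e_G$ means every neighbourhood of $e_G$ meets $\{x^k:k\in\N\}\subseteq S$, and since $S$ is open and $e_G\in\overline S$, the identity must already belong to $S$ (an open set containing a point of $\overline S$ in its interior cannot miss $S$; more precisely $e_G\in\overline S$ together with $S$ open forces $e_G\in S$ only once we know $e_G$ lies in some power-accumulation, which guarantees a net from $S$ converging to $e_G$ whose tail lands in the open neighbourhood $S$ of $e_G$).

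The main obstacle I anticipate is precisely this last passage from $e_G\in\overline S$ to $e_G\in S$, because openness of $S$ alone does not make $S$ closed a priori, so one must be careful that the argument is not circular. The clean way is: since $e_G\in\overline{\{x^k:k\in\N\}}$, there is a net of powers $x^{k_\alpha}\in S$ converging to $e_G$; as $S$ is open and will turn out to contain $e_G$ in its interior, any sufficiently advanced member of a convergent net from $S$ toward $e_G$ witnesses $e_G$ as a limit from within $S$, and because open subsemigroups of topological groups containing a point of their boundary of this accumulating type must absorb it, we obtain $e_G\in S$. Once $e_G\in S$ is secured, $S$ is an open pseudocompact-free submonoid, so Theorem~\ref{2910}'s argument via Lemma~\ref{810}, Proposition~\ref{8102} and Proposition~\ref{910} applies verbatim with precompactness in place of pseudocompactness, giving that $S=\overline S$ is a subgroup; openness then makes it closed, completing the proof.
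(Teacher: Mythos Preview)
Your proposal has two genuine gaps.

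\textbf{The passage $e_G\in\overline S\Rightarrow e_G\in S$ is circular as written.} You say ``as $S$ is open and will turn out to contain $e_G$ in its interior\ldots'', which presupposes the very conclusion you are after. Openness of $S$ and $e_G\in\overline S$ do not by themselves give $e_G\in S$ (think of $(0,1)\subseteq\mathbb R$). A correct argument does exist: from Lemma~\ref{1810} one has $e_G\in\overline{\{x^{-m}:m\ge 1\}}$; since $x^{-1}S$ is an open neighbourhood of $e_G$, some $x^{-m}$ lies in $x^{-1}S$, i.e.\ $x^{1-m}\in S$; whether $m=1$ or $m\ge 2$, this forces $e_G\in S$. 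But you did not supply this, and what you wrote is not a proof.

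\textbf{More seriously, the reduction to Theorem~\ref{2910} fails.} Even after securing $e_G\in S$, you propose to run ``Theorem~\ref{2910}'s argument via Lemma~\ref{810}, Proposition~\ref{8102} and Proposition~\ref{910} \ldots\ verbatim with precompactness in place of pseudocompactness''. This does not work. Proposition~\ref{910} rests on Theorem~\ref{2810}, whose proof uses pseudocompactness twice in an essential way: first to deduce that $\overline S/N$ is \emph{compact} (precompact $+$ pseudocompact $\Rightarrow$ compact in the metrizable quotient), and second to conclude that $\pi(S)$, being pseudocompact and metrizable, is compact and hence a group. With precompactness alone, $\overline S/N$ is merely a precompact metrizable group and $\pi(S)$ a precompact subsemigroup of it; Proposition~\ref{8102} then only tells you that the \emph{closure} of $\pi(S)$ is a group, which is useless for pulling inverses back into $S$. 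Precompactness does not imply pseudocompactness (e.g.\ any dense countable subgroup of $\mathbb T$), so there is no free upgrade.

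The paper's proof avoids this by a different mechanism: it uses precompactness of $\overline S$ (via \cite[Lemma~3.7.7]{Ar}) to produce an \emph{open} normal subgroup $H\subseteq S\cap S^{-1}$ of $\overline S$, so that $\overline S/H$ is discrete and precompact, hence \emph{finite}; then $\pi(S)$ is a finite cancellative semigroup, automatically a group, and inverses can be lifted back into $S$. That finiteness step is the replacement for the compactness step you are missing.
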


\begin{proof}
Let $S$ be an open precompact subsemigroup of $G$. From Proposition \ref{8102} we have that $\overline{S}$ is a precompact subgroup of $G$. 	Therefore $K=S\cap S^{-1}$ is an open subgroup of $\overline{S}$. Let $H=\bigcap_{s\in \overline{S}} sKs^{-1}$, then $H$ is a normal subgroup of $G$, we will see that $H$ is open in $\overline{S}$. Indeed, since $K\in N(e)$ and $\overline{S}$ is precompact, there is $V\in N(e)$ such that $s^{-1}Vs\subseteq K$ for every $s\in \overline{S}$, that is, $V\subseteq \bigcap_{s\in \overline{S}} sKs^{-1}=H$, therefore $H$ is open. Let us consider the discrete quotient group $\overline{S}/H$ and the respective quotient mapping $\pi \colon \overline{S}\longrightarrow \overline{S}/H$. Since $\overline{S}$ is precompact and $\pi$ is a continuous homomorphism, then $\overline{S}/H$ is precompact and discrete and therefore it is finite. Thus $\pi(S)$ is a subgroup of $G$. Let $s\in S$, then $\pi(s)=sH\in \pi(S)$, since $\pi(S)$ is a group, hence $(sH)^{-1}=s^{-1}H\in \pi(S)$, so that there exists $x\in S$ such that $s^{-1}H=xH$. Thus $x^{-1}s^{-1}\in H\subseteq x^{-1}Kx\subseteq x^{-1}Sx$. We conclude  that $s^{-1}\in Sx\subseteq S$, this proves that $S$ is an open  subgroup of $G$. Since that open subgroups are closed, this finishes the proof.
\end{proof}

The Theorem \ref{2910} has been seen as a consequence of Lemma \ref{810} and  the Proposition \ref{910}, however it is also an immediate consequence of the Theorem \ref{2012}. The openness  in the Theorem \ref{2012} cannot be removed. For example,  $S=(\mathbb{Z}+ \sqrt{2}\mathbb{N})/\mathbb{Z}$ is a precompact subsemigroup that fails to be a subgroup.\\\\
 We say that a space $X$ has \emph{countable cellularity } or has \emph{the Souslin property} if every family of pairwise disjoint open sets in $X$ has cardinality less than or equal to $\aleph_0$. From \cite{pseudo} we know that the Souslin property Property is in\-he\-ri\-ted to dense subspaces. The class of the spaces having the Souslin property is large. For example, contains the $\sigma$-compact paratopological groups(see \cite[Corollary 2.3]{T3} and the subsemigroups of precompact groups (see \cite[Corollary 3.6]{tomita}. Now, since each precompact group (and hence each pseudocompact group)  is a dense subgroup of a compact group, then precompact groups have the Souslin property. Therefore Theorem \ref{2910} and Proposition \ref{8102} imply the next result.

\begin{coro}\label{1212}Every precompact subsemigroup of a topological group has countable cellularity. Particularly, each open pseudocomapct submonoid of a topological group has countable cellularity.
\end{coro}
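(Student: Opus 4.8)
The plan is to reduce the statement to the Souslin property of precompact \emph{groups}, which has already been recorded just before the corollary. Let $S$ be a precompact subsemigroup of a topological group $G$. The first step is to pass to the closure: by Proposition \ref{8102}, $\overline{S}$ is a precompact subgroup of $G$. Since every precompact group embeds as a dense subgroup of a compact group, and compact groups plainly have the Souslin property, it follows that $\overline{S}$ has countable cellularity.

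The second step is to transfer this property back down to $S$ itself. By construction $S$ is dense in $\overline{S}$, and, as recalled above, the Souslin property is inherited by dense subspaces. Hence $S$ has countable cellularity, which establishes the first assertion.

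For the particular case I would simply invoke Theorem \ref{2910}: every open pseudocompact submonoid of a topological group is a precompact closed subgroup, and in particular a precompact subsemigroup. The first assertion then applies verbatim and yields countable cellularity for such submonoids.

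Since every ingredient has already been established, there is no substantial obstacle to overcome; the result is a synthesis of earlier facts. The only points requiring attention are the bookkeeping of which earlier result supplies which fact --- namely that Proposition \ref{8102} is precisely what guarantees $\overline{S}$ is a \emph{group} (so that the cellularity statement for precompact groups becomes applicable), and that the density of $S$ in its own closure $\overline{S}$ is exactly what allows the Souslin property to descend from $\overline{S}$ to $S$.
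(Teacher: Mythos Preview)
Your argument is correct and follows essentially the same route as the paper: the paragraph preceding the corollary records that precompact groups have the Souslin property (via density in a compact group) and that this property passes to dense subspaces, and then cites Proposition~\ref{8102} and Theorem~\ref{2910} as yielding the two assertions. You have simply spelled out the density step $S\subseteq\overline{S}$ explicitly, which is exactly the intended mechanism.
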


A space $X$ is called \emph{countably compact} if any open cover of $X$ has a numerable subcover. Since each Tychonoff countably compact space is pseudocompact (see \cite[Theorem 3.10.20]{En}),  then 	Corollary \ref{1212} is a partial answer to the Problem 2.5.2 of \cite{Ar}.

\end{document}